\newtheorem{lem}{Lemma}[section]
\newtheorem{prop}{Proposition}[section]
\newtheorem{cor}{Corollary}[section]
\newtheorem{thm}{Theorem}[section]
\theoremstyle{definition}
\theoremstyle{remark}
\theoremstyle{remark}
\newtheorem{remark}{Remark}[section]
\numberwithin{equation}{section}
\newcommand{\R}{{\mathbb R}}
\definecolor{blu}{rgb}{0,0,1}
\begin{document}
\title[Sharp  lower bounds for Coulomb energy]{Sharp lower bounds for Coulomb energy}

\author{Jacopo Bellazzini}
\address{J. Bellazzini, 
\newline  Universit\`a di Sassari, Via Piandanna 4, 07100 Sassari, Italy}%
\email{jbellazzini@uniss.it}%
\author{Marco Ghimenti}
\address{M. Ghimenti, \newline Dipartimento di Matematica Universit\`a di Pisa
Largo B. Pontecorvo 5, 56100 Pisa, Italy}%
\email{ghimenti@mail.dm.unipi.it}
\author{Tohru Ozawa}
\address{T. Ozawa
\newline Department of Applied Physics, Waseda University, Tokyo 169-8555, Japan}%
\email{txozawa@waseda.jp }%
\keywords{radial symmetry, sharp emebeddings, Coulomb energy, fractional Sobolev spaces}
\subjclass[2010]{46E35, 39B62}
\maketitle

\begin{abstract}
We prove $L^p$ lower bounds for Coulomb energy for radially symmetric functions in $\dot H^s(\R^3)$ with $\frac 12 <s<\frac{3}{2}$. In case 
$\frac 12 <s \leq 1$ we show that the lower bounds are sharp.
\end{abstract}
In this paper  we prove lower bounds for the Coulomb energy 
$$\iint_{\R^3\times \R^3}
\frac{|\varphi(x)|^2 |\varphi(y)|^2}{|x-y|} \, dxdy$$
if radial symmetry of $\varphi$ is assumed.\\
In the general case, without restricting to radial functions, the upper bound for the Coulomb energy is given by the
well known Hardy-Littlewood-Sobolev inequality while lower bounds have been proved only very recently. In particular if
one can control suitable homogeneous Sobolev space $\dot H^{s}(\R^3)$ 
the $L^p$ lower bound for  the Coulomb energy is given by the following inequalities
\begin{equation}
\label{eq:dbound}
\|\varphi\|_{L^{2p}(\R^3)}\leq C(p,s) \|\varphi\|_{\dot H^{s}(\R^3)}^{\frac{\theta}{2-\theta}}\left(\iint_{\R^3\times \R^3}
\frac{|\varphi(x)|^2 |\varphi(y)|^2}{|x-y|} \, dxdy\right)^{\frac{1-\theta}{4-2\theta}}
\end{equation}
with $\theta=\frac{6-5p}{3-2ps-2p}$. Here the parameters $s>0$ and $1<p\leq\infty$ satisfy
\begin{align*}
& p\in \left[\frac{3}{3-2s}, \frac{1+2s}{1 +s}\right] & & \text{if}\ 0<s<1/4 \,, \\
& p = \frac{3}{3-2s}= \frac{1+2s}{1 +s} & & \text{if}\ s=1/4 \,, \\
& p\in\left[ \frac{1+2s}{1 +s},  \frac 3{3-2s}\right] & & \text{if}\ 1/4< s<3/2 \,, \\
& p\in \left[\frac{1+2s}{1 +s}, \infty\right) & & \text{if}\ s= 3/2 \,, \\
& p\in \left[\frac{1+2s}{1+s}, \infty\right] & & \text{if}\ s> 3/2 \,.
\end{align*}
These bounds have been proved in \cite{BFV} while the case $s=\frac 12$ has been first considered in \cite{BOV}.
These bounds follows from a suitable Gagliardo-Nirenberg inequality, see Theorem 2.44 of \cite{BCD}, together with the 
following well known identity
$$\iint_{\R^3\times \R^3} \frac{|\varphi(x)|^2 |\varphi(y)|^2}{|x-y|} dxdy =c ||\varphi ^2||_{\dot H^{-1}(\R^3)}^2.$$ 
We shall underline that in many physical applications involving Sobolev norms and Coulomb energy the radially symmetric assumption of $\varphi$ is natural due to the rotational invariance of energy functionals (see e.g \cite{LS} in the context of stability of matter). Our purpose is to see if it is possible to control lower $L^p$ norms if one assumes \emph{radial symmetry} of $\varphi$.\\
In the sequel we use two theorems that are crucial for our improvement  in case of radial symmetry. 
The first is the following pointwise decay for radial functions in $\dot H^{s}(\R^d)\cap L^q_a(\R^d)$, see \cite{D},
where $ L^q_a(\R^d)$ is the weighted Lebesgue space with the norm
$$||u||_{L^q_a(\R^d)}=\left(\int_{\R^d} |x|^a|u|^qdx\right)^{\frac{1}{q}}$$
\begin{thm}[De N\'{a}poli  \cite{D}]\label{thm:denapoli} Let $\varphi$ be a radial function in $\dot H^{s}(\R^d)\cap L^q_a(\R^d)$ with $s>\frac 12$ and $-(d-1)<a<d(q-1)$, then
$$ |\varphi(x)|\leq C(d,s,q,a)|x|^{-\sigma} ||(-\Delta)^\frac{s}{2} \varphi||_{L^2(\R^d)}^{\theta}||\varphi|||_{L^q_a(\R^d)}^{1-\theta}$$
where $\theta=\frac{2}{2sq+2-q}$, $\sigma=\frac{2as+2ds-a-2s}{2sq+2-q} $.
\end{thm}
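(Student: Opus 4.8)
The strategy is to reduce the $d$-dimensional inequality to a one-dimensional weighted Gagliardo--Nirenberg estimate via the ground-state (Liouville) substitution $v(r)=r^{(d-1)/2}\varphi(r)$, and to prove the latter by optimizing, over the length of a localizing interval, a scale-localized Sobolev embedding on the half-line. Dimensional analysis fixes $\sigma$ in terms of $\theta$, while the precise value $\theta=\tfrac{2}{2sq+2-q}$ will drop out of the optimization; both identities are checked at the end.

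\textbf{Step 1 (dimensional reduction).} Write $\varphi(x)=\varphi(r)$, $r=|x|$, set $v(r)=r^{(d-1)/2}\varphi(r)$ and extend $v$ oddly to $\R$; note $v(0)=0$ since $s>1/2$. The Liouville change of variables intertwines the radial Laplacian $-\partial_r^2-\tfrac{d-1}{r}\partial_r$ with the Bessel operator $-\partial_r^2+\tfrac{(d-1)(d-3)}{4r^2}$. When $d=3$ — the case used in this paper — the potential vanishes, $\varphi\mapsto r\varphi$ is (a multiple of) a unitary map onto $L^2((0,\infty),dr)$ carrying $(-\Delta)_{\mathrm{rad}}$ to the Dirichlet Laplacian, and one obtains the \emph{exact} identity $\|(-\Delta)^{s/2}\varphi\|_{L^2(\R^3)}=c\,\|v\|_{\dot H^s(\R)}$ for every $s\in(1/2,3/2)$ (equivalently, on the Fourier side, $\widehat\varphi(\xi)$ is a constant multiple of $|\xi|^{-1}\widehat v(|\xi|)$). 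For general $d\ge3$ only the one-sided bound $\|v\|_{\dot H^s(\R)}\le C\|(-\Delta)^{s/2}\varphi\|_{L^2(\R^d)}$ is needed, and this follows from $(d-1)(d-3)\ge0$ together with the operator monotonicity of $t\mapsto t^s$ for $0<s\le1$; this is the main technical hurdle, since the inverse-square potential is scaling critical ($d=2$ being a borderline Hardy case) and operator monotonicity fails for $s>1$ — for the range relevant here these difficulties do not arise. Finally, $\|\varphi\|_{L^q_a(\R^d)}^q=c\int_0^\infty \rho^{\,b}|v(\rho)|^q\,d\rho$ with $b=a+\tfrac{(d-1)(2-q)}{2}$, and $|\varphi(x)|=|x|^{-(d-1)/2}|v(|x|)|$; so the theorem reduces to the one-dimensional estimate: for all $r>0$,
$$|v(r)|\ \le\ C\,r^{-\beta}\,\|v\|_{\dot H^s(\R)}^{\theta}\,M^{1-\theta},\qquad M:=\Big(\int_0^\infty\rho^{\,b}|v(\rho)|^q\,d\rho\Big)^{1/q},\qquad \beta:=\sigma-\tfrac{d-1}{2}.$$

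\textbf{Step 2 (the one-dimensional estimate).} Fix $r>0$. For $0<\delta\le r$ put $I=(r,r+\delta)$, choose by the mean-value property a point $x_0\in I$ with $|v(x_0)|\le\delta^{-1/q}\|v\|_{L^q(I)}$, and invoke the homogeneous Morrey embedding $[v]_{C^{0,\,s-1/2}(\R)}\le C\|v\|_{\dot H^s(\R)}$ (valid as $1/2<s<3/2$; for $s\ge3/2$ one differentiates first) to get
$$\|v\|_{L^\infty(I)}\ \le\ |v(x_0)|+\delta^{\,s-1/2}[v]_{C^{0,\,s-1/2}(I)}\ \le\ C\Big(\delta^{-1/q}\|v\|_{L^q(I)}+\delta^{\,s-1/2}\|v\|_{\dot H^s(\R)}\Big).$$
Since $\delta\le r$ forces $I\subset(r,2r)$, where $\rho^{\,b}\sim r^{\,b}$ irrespective of the sign of $b$, we have $\|v\|_{L^q(I)}\le C\,r^{-b/q}M$. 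Balancing the two terms at $\delta_\ast=c\,(r^{-b/q}M/\|v\|_{\dot H^s(\R)})^{1/(s-1/2+1/q)}$ gives two cases. If $\delta_\ast\le r$, the optimization yields directly $|v(r)|\le Cr^{-\beta}\|v\|_{\dot H^s(\R)}^{\theta}M^{1-\theta}$ with $\theta=\tfrac{1/q}{s-1/2+1/q}=\tfrac{2}{2sq+2-q}$ and $\beta=\tfrac{b}{q}(1-\theta)=\tfrac{(2s-1)b}{2sq+2-q}$, which one checks gives $\tfrac{d-1}{2}+\beta=\sigma$. If $\delta_\ast>r$, take $\delta=r$ and keep only the Sobolev term, $|v(r)|\le Cr^{s-1/2}\|v\|_{\dot H^s(\R)}$, which in that regime — characterized precisely by $\|v\|_{\dot H^s(\R)}\le CMr^{-(s-1/2+(b+1)/q)}$ — is again $\le Cr^{-\beta}\|v\|_{\dot H^s(\R)}^{\theta}M^{1-\theta}$. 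Undoing the substitution proves the theorem; a density argument in $\dot H^s(\R^d)\cap L^q_a(\R^d)$ removes the regularity tacitly used, and the hypotheses $-(d-1)<a<d(q-1)$ on the weight are exactly what make the integrals above finite and the two regimes patch together. As noted, the only genuinely delicate ingredient is the fractional, higher-dimensional norm comparison of Step 1; the rest is interpolation bookkeeping.
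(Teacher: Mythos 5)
The paper does not actually prove this statement: it is imported from De N\'apoli's preprint, and the accompanying remark only describes his strategy, which is Fourier-analytic --- represent the radial function through the Bessel formula $\varphi(x)=(2\pi)^{d/2}|x|^{-(d-2)/2}\int_0^\infty J_{(d-2)/2}(|x|\rho)\hat\varphi(\rho)\rho^{d/2}\,d\rho$, split into low and high frequencies, and control the high-frequency part by the $\dot H^s$ norm and the low-frequency part by the weighted $L^q_a$ norm, as in Cho--Ozawa. Your argument is a genuinely different, real-space route: Liouville substitution $v=r^{(d-1)/2}\varphi$, then a scale-localized Morrey estimate optimized over the window length. Your bookkeeping is correct: $\theta=\frac{1/q}{s-1/2+1/q}=\frac{2}{2sq+2-q}$, $b=a+\frac{(d-1)(2-q)}{2}$, and $\frac{d-1}{2}+\frac{b}{q}(1-\theta)=\frac{2as+2ds-a-2s}{2sq+2-q}=\sigma$ all check out, as does the characterization of the regime $\delta_\ast>r$ by $\|v\|_{\dot H^s(\R)}\le CMr^{-(s-1/2+(b+1)/q)}$ and the fact that $r^{s-1/2}\|v\|_{\dot H^s(\R)}\le Cr^{-\beta}\|v\|_{\dot H^s(\R)}^{\theta}M^{1-\theta}$ there. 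What your approach buys is a self-contained elementary proof in the case the paper actually needs ($d=3$, $q=2$, $\frac12<s<\frac32$), where the Liouville map is exactly unitary on $\dot H^s$; what the Fourier--Bessel route buys is the full generality in $d$ and $s$ in which the theorem is stated.

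Two points need repair. First, in the case $\delta_\ast>r$ you may not ``take $\delta=r$ and keep only the Sobolev term'': with $\delta=r$ the upper bound contains both terms, and in that regime the $L^q$ term is the \emph{larger} one, so discarding it is not a legitimate step. The bound $|v(r)|\le Cr^{s-1/2}\|v\|_{\dot H^s(\R)}$ you want is nevertheless true, but it must be obtained directly from $v(0)=0$ together with the H\"older seminorm estimate, $|v(r)|=|v(r)-v(0)|\le Cr^{s-1/2}[v]_{C^{0,s-1/2}}$; and $v(0)=0$ is itself part of what must be verified when you assert that the odd extension belongs to $\dot H^s(\R)$ (an odd extension with a jump at the origin does not, for $s>\frac12$), so this deserves an explicit argument rather than a parenthetical. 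Second, as you partly concede, Step 1 works only for $d\ge3$ (for $d=2$ the Liouville potential is the critical attractive Hardy potential), only for $s\le1$ when $d\neq3$ (L\"owner--Heinz), and the $s\ge\frac32$ case is waved at; so you prove the theorem in the range the paper uses but not in the stated generality. Finally, the claim that the hypotheses $-(d-1)<a<d(q-1)$ are ``exactly what make the integrals finite'' is unsubstantiated --- nothing in your Step 2 visibly invokes them --- and should either be justified or dropped.
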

\begin{remark}
The strategy of the proof  of Theorem \ref{thm:denapoli}  is based on Fourier representation for radial functions in $\R^d$ (identifying the function with its profile)
$$
\varphi (x)=(2\pi)^{\frac{d}{2}}|x|^{-\frac{d-2}{2}}\int_0^{\infty} J_{\frac{d-2}{2}}(|x|\rho)\hat \varphi (\rho)\rho^{\frac{d}{2}}d\rho
$$
where $J_{\frac{d-2}{2}}$ is the Bessel function of order $\frac{d-2}{2}.$ The argument is similar to the one developed in \cite{CO} for  the pointwise decay of radial function in $\dot H^s(\R^d)$, i.e to split $\varphi$ into low and high frequency parts. The pointwise decay of high frequency part of $\varphi$ will be controlled by the boundess of Sobolev norm while the decay of low frequency  part
by  the boundness of the weighted Lebesgue norm.
\end{remark}
The second theorem is the following lower bound for the Coulomb energy by Ruiz, see \cite{R}.
\begin{thm}[Ruiz \cite{R}]\label{thm:ruiz}
Given $\alpha>\frac{1}{2}$, there exists $c=c(\alpha)>0$ such that for any measurable $\varphi:\R^d \rightarrow \R$ we have
$$\iint_{\R^d\times \R^d}
\frac{|\varphi(x)|^2 |\varphi(y)|^2}{|x-y|^{d-2}} dxdy \geq c \left( \int_{\R^d} \frac{|\varphi(x)|^2}{|x|^{\frac{d-2}{2}}(1+|\log |x ||)^{\alpha}}dx \right)^2.$$
\end{thm}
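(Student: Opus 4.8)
The plan is to exploit that $|x-y|^{-(d-2)}$ is, up to the dimensional constant $(d-2)\omega_{d-1}$ (with $\omega_{d-1}$ the measure of the unit sphere of $\R^d$, $d\ge 3$), the Green function of $-\Delta$, so that the left-hand side is nothing but the squared $\dot H^{-1}(\R^d)$ norm of the density $\rho:=|\varphi|^2\ge 0$. The inequality then amounts to a lower bound for $\|\rho\|_{\dot H^{-1}}$, which I would obtain by testing $\rho$ against a single well-chosen nonnegative function: the right-hand side of the statement itself tells us which one to take, namely the weight.

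Concretely, I would first set $u(x)=\frac1{(d-2)\omega_{d-1}}\int_{\R^d}\frac{\rho(y)}{|x-y|^{d-2}}\,dy$, so that $u\ge 0$ and $-\Delta u=\rho$ in $\mathcal D'(\R^d)$; if the Coulomb energy is infinite there is nothing to prove, so one may assume $u\in\dot H^1(\R^d)$. An integration by parts gives
$$\iint_{\R^d\times\R^d}\frac{|\varphi(x)|^2|\varphi(y)|^2}{|x-y|^{d-2}}\,dxdy=(d-2)\omega_{d-1}\int_{\R^d}\rho\,u\,dx=(d-2)\omega_{d-1}\|u\|_{\dot H^1}^2.$$
Next I would introduce $\psi_0(x)=|x|^{-\frac{d-2}{2}}\left(1+|\log|x||\right)^{-\alpha}$, pair it with $\rho$, and use Cauchy--Schwarz in $\dot H^1$:
$$\int_{\R^d}\frac{|\varphi(x)|^2}{|x|^{\frac{d-2}{2}}(1+|\log|x||)^{\alpha}}\,dx=\int_{\R^d}\rho\,\psi_0\,dx=\int_{\R^d}\nabla u\cdot\nabla\psi_0\,dx\le\|u\|_{\dot H^1}\,\|\psi_0\|_{\dot H^1}.$$
Squaring, substituting the previous identity and rearranging yields the claim with $c(\alpha)=(d-2)\omega_{d-1}/\|\psi_0\|_{\dot H^1}^2$, \emph{provided} this last quantity is finite.

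Everything therefore reduces to checking that $\psi_0\in\dot H^1(\R^d)$ when $\alpha>\tfrac12$. Writing $\psi_0(x)=f(|x|)$ and differentiating, one finds $|f'(r)|\le\left(\frac{d-2}{2}+\alpha\right)r^{-d/2}(1+|\log r|)^{-\alpha}$, whence
$$\|\psi_0\|_{\dot H^1}^2=\omega_{d-1}\int_0^\infty|f'(r)|^2r^{d-1}\,dr\lesssim\int_0^\infty\frac{dr}{r\,(1+|\log r|)^{2\alpha}}=\int_{-\infty}^\infty\frac{dt}{(1+|t|)^{2\alpha}},$$
which converges precisely when $2\alpha>1$. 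This is the one and only place where the hypothesis $\alpha>1/2$ enters, and it is the heart of the matter: the homogeneity $|x|^{-(d-2)/2}$ is forced by scaling, while the logarithmic correction with critical exponent $2\alpha$ is exactly what makes the Dirichlet integral of $\psi_0$ barely converge — so the threshold $\alpha=\tfrac12$ is sharp.

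The remaining difficulty is purely technical: for a merely measurable $\varphi$ one must justify the integrations by parts and the identification of the $\dot H^{-1}$--$\dot H^1$ pairing with $\int\rho\psi_0$. This I would handle by truncating $\varphi$ (so that $\rho\in L^1\cap L^\infty$ with compact support, hence $u\in\dot H^1$ and all manipulations are classical) and then passing to the limit by monotone convergence, using that $\rho$ and $\psi_0$ are nonnegative and that the left-hand side has already been assumed finite. I expect this step to be routine and the genuine content of the proof to lie entirely in the choice of $\psi_0$ and the logarithmic borderline computation above.
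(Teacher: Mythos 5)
This statement is imported from Ruiz's paper \cite{R} and is not proved in the present paper, so there is no in-house argument to compare against; judged on its own terms, your proof is correct for $d\ge 3$ (the only case in which the kernel $|x-y|^{-(d-2)}$ is the Newton kernel, and the only case the paper uses, namely $d=3$). The chain
$\int\rho\,\psi_0=\int\nabla u\cdot\nabla\psi_0\le\|u\|_{\dot H^1}\|\psi_0\|_{\dot H^1}$ together with $\iint\frac{\rho(x)\rho(y)}{|x-y|^{d-2}}\,dxdy=(d-2)\omega_{d-1}\|u\|_{\dot H^1}^2$ gives exactly the stated bound with $c(\alpha)=(d-2)\omega_{d-1}\|\psi_0\|_{\dot H^1}^{-2}$, and your computation that $\|\psi_0\|_{\dot H^1}^2\lesssim\int_{\R}(1+|t|)^{-2\alpha}\,dt<\infty$ iff $\alpha>\tfrac12$ is right (one should also note $\psi_0\in L^{2d/(d-2)}$, which holds since $\alpha>\tfrac12>\tfrac{d-2}{2d}$, so $\psi_0$ is a genuine element of $\dot H^1$, and the integration by parts is harmless because $\nabla u$ decays like $|x|^{-(d-1)}$ for truncated densities). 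Your route differs from Ruiz's original one: he decomposes $\R^3$ into dyadic annuli $A_k=\{2^k\le|x|<2^{k+1}\}$, bounds the Coulomb energy below by $\sum_k c_k^2 2^{-k}$ with $c_k=\int_{A_k}\rho$, and applies the discrete Cauchy--Schwarz inequality against the weights $(1+|k|)^{-\alpha}$, the condition $\alpha>\tfrac12$ entering as $\sum_k(1+|k|)^{-2\alpha}<\infty$. The two arguments are continuous and discrete avatars of the same borderline computation; yours is slicker, makes the constant explicit, and makes transparent that the whole theorem is the single statement $\psi_0\in\dot H^{1}$. Two small caveats: your parenthetical claim that the threshold $\alpha=\tfrac12$ is sharp does not follow from your argument (which only shows that this particular test function leaves $\dot H^1$ at $\alpha=\tfrac12$; sharpness requires a separate counterexample, which Ruiz provides), and the statement as literally written for arbitrary $d$ should be read with $d\ge3$, since for $d=1$ the kernel $|x-y|$ is not of positive type and the inequality fails for densities concentrated near a far-away point.
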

Let us define
$$\mathcal{E}^s=\{ \varphi \in  \dot H^s_{rad}(\R^3)  \  \ s.t  \ \iint_{\R^3\times \R^3}
\frac{|\varphi(x)|^2 |\varphi(y)|^2}{|x-y|} dxdy <\infty \}$$
with 
$$||\varphi||_{\mathcal{E}^s}=\left(||\varphi||_{\dot H^s(\R^3)}^2+\left(\iint_{\R^3\times \R^3}
\frac{|\varphi(x)|^2 |\varphi(y)|^2}{|x-y|} dxdy\right)^{\frac 12} \right)^{\frac 12}.$$
Following the argument of Ruiz \cite{R} it is easy to show that $||\cdot||_{\mathcal{E}^s}$ is a norm and $C^{\infty}_0(\R^3)$ is dense in $\mathcal{E}^s$.
In \cite{R} Ruiz  proved that for $\mathcal{E}^1$ the following continuous embedding
$$\mathcal{E}^1  \hookrightarrow  L^p \ \ \ p\in(\frac{18}{7},6].$$
The result by Ruiz follows from two steps: first,  Theorem \ref{thm:ruiz} proves that $\mathcal{E}^1\subset \dot H^1_{rad}(\R^3) \cap L^2(\R^3, V(x)dx)$ where
$V(x)=\frac{1}{(1+|x|)^{\gamma}}$ with $\gamma>\frac{1}{2}$, second, a weighted Sobolev embedding for radial function proved
by Su, Wang and Wilem \cite{SWW} gives the inclusion
$$\dot H^1_{rad}(\R^3) \cap  L^2(\R^3, V(x)dx) \subset L^q(\R^3) \ \ \ q\in [\frac{2(4+\gamma)}{4-\gamma},6]$$

The aim of our paper is to find continuous embeddings and hence better lower bounds for the Coulomb energy assuming radial symmetry when $\frac 12<s<\frac{3}{2}$. As a particular case we recover
$p=\frac{18}{7}$ as end-point exponent when $s=1$.
\begin{thm}\label{thm:nonsharp}
$\mathcal{E}^s \hookrightarrow L^p(\R^3)$ continuously for 
\begin{align*}
& p\in\left( \frac{16s+2}{6s+1},  \frac 6{3-2s}\right] & & \text{if}\ 1/2< s<3/2 . \\
\end{align*}
\end{thm}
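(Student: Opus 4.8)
The plan is to combine Theorems~\ref{thm:denapoli} and~\ref{thm:ruiz} with the standard Sobolev embedding $\dot H^s(\R^3)\hookrightarrow L^{6/(3-2s)}(\R^3)$ (available since $0<s<3/2$), treating the unit ball by Sobolev and its complement by a dyadic interpolation between the pointwise decay and a weighted $L^2$ bound. Observe at the outset that the right endpoint $p=\frac 6{3-2s}$ is precisely the critical Sobolev exponent, so that $\|\varphi\|_{L^{6/(3-2s)}}\le C\|\varphi\|_{\dot H^s}\le C\|\varphi\|_{\mathcal{E}^s}$ settles it without ever using the Coulomb energy; the real content is the lower part of the interval, and in particular why it is open at $\frac{16s+2}{6s+1}$.

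First I would extract from finiteness of the Coulomb energy a power--weighted $L^2$ bound. Theorem~\ref{thm:ruiz} in $d=3$, for a fixed $\alpha>\tfrac12$, gives $\int_{\R^3}\frac{|\varphi(x)|^2}{|x|^{1/2}(1+|\log|x||)^{\alpha}}\,dx\le c^{-1}\|\varphi\|_{\mathcal{E}^s}^2$. Splitting $\R^3$ at $|x|=1$: the Ruiz weight dominates $|x|^{-1/2-\eta}$ on $\{|x|\ge1\}$, while on $\{|x|<1\}$ one applies Hölder against $\|\varphi\|_{L^{6/(3-2s)}}$, whose conjugate exponent is $\tfrac3{2s}$, so that finiteness of $\int_{|x|<1}|x|^{-(1/2+\eta)\cdot 3/(2s)}\,dx$ requires $\tfrac12+\eta<2s$, which is admissible because $s>\tfrac12$. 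Hence for all small $\eta>0$ one gets $\varphi\in L^2_{-1/2-\eta}(\R^3)$ with $\|\varphi\|_{L^2_{-1/2-\eta}}\le C\|\varphi\|_{\mathcal{E}^s}$.

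Next, Theorem~\ref{thm:denapoli} with $d=3$, $q=2$, $a=-\tfrac12-\eta$ (admissible: $-2<a<3$ and $s>\tfrac12$) yields $|\varphi(x)|\le C|x|^{-\sigma_\eta}\|\varphi\|_{\mathcal{E}^s}$ for $x\ne 0$, where $\theta=\tfrac1{2s}$ and $\sigma_\eta=\frac{3s+\frac12+\eta(1-2s)}{4s}\to\frac{6s+1}{8s}$ as $\eta\to0^+$, using $\|(-\Delta)^{s/2}\varphi\|_{L^2}\le\|\varphi\|_{\mathcal{E}^s}$ together with the weighted bound just obtained. Now I split $\|\varphi\|_{L^p}^p$. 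On $\{|x|<1\}$, for $p\le\tfrac6{3-2s}$, Hölder gives $\int_{|x|<1}|\varphi|^p\,dx\le C\|\varphi\|_{L^{6/(3-2s)}}^p\le C\|\varphi\|_{\mathcal{E}^s}^p$. On $\{|x|\ge1\}$, decompose into dyadic annuli $A_k=\{2^k\le|x|<2^{k+1}\}$, $k\ge0$, and estimate $\int_{A_k}|\varphi|^p\le(\sup_{A_k}|\varphi|)^{p-2}\int_{A_k}|\varphi|^2$, bounding the first factor by $(C2^{-\sigma_\eta k}\|\varphi\|_{\mathcal{E}^s})^{p-2}$ (legitimate since $p>\frac{16s+2}{6s+1}>2$) and the second by $C2^{k/2}(1+k)^{\alpha}\tilde c_k$, where $\tilde c_k=\int_{A_k}\frac{|\varphi|^2}{|x|^{1/2}(1+|\log|x||)^{\alpha}}\,dx$ and $\sum_k\tilde c_k\le c^{-1}\|\varphi\|_{\mathcal{E}^s}^2$. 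Summing over $k$, the factor $(1+k)^{\alpha}\,2^{(\frac12-\sigma_\eta(p-2))k}$ is bounded in $k$ precisely when $p>2+\tfrac1{2\sigma_\eta}$, and $2+\tfrac1{2\sigma_\eta}\to\frac{16s+2}{6s+1}$ as $\eta\to0^+$; so for each $p>\frac{16s+2}{6s+1}$ one fixes $\eta$ small enough, deduces $\int_{|x|\ge1}|\varphi|^p\,dx\le C\|\varphi\|_{\mathcal{E}^s}^p$, and adds the two pieces.

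The main obstacle is exactly the mismatch between the logarithmically corrected weight produced by Theorem~\ref{thm:ruiz} and the pure power weight required by Theorem~\ref{thm:denapoli}: it is resolved by the $\eta$--room in the second step (available only because $s>\tfrac12$) and by absorbing the surviving factor $(1+k)^{\alpha}$ into the dyadic sum, and it is precisely this loss that forces the interval to be open at $\frac{16s+2}{6s+1}$. The remaining verifications---that $2+\tfrac1{2\sigma_\eta}$ indeed decreases to $\frac{16s+2}{6s+1}$, and that the side conditions $\eta<2s-\tfrac12$ and $2<p\le\tfrac6{3-2s}$ can be met simultaneously---are routine bookkeeping once this scheme is in place.
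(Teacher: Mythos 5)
Your argument is correct and follows essentially the same route as the paper: Ruiz's inequality supplies a power--weighted $L^2$ bound, De N\'apoli's theorem converts it into the pointwise decay $|x|^{-\sigma}$ with $\sigma\to\frac{6s+1}{8s}$, and the split at $|x|=1$ with H\"older inside and decay--times--weighted-$L^2$ outside yields exactly the threshold $\frac{16s+2}{6s+1}$. The only (harmless) differences are that you use the critical Sobolev exponent on the ball where the paper uses the endpoint of \eqref{eq:dbound}, you obtain the weighted bound near the origin via H\"older plus Sobolev where the paper invokes Pitt's inequality, and you handle the logarithmic factor in Ruiz's weight by an explicit dyadic summation where the paper reapplies Proposition \ref{facile} with $\gamma\to\frac12$.
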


The above result is sharp when $\frac{1}{2}<s\leq 1$ as showed by the following 

\begin{thm}\label{thm:sharp}
Let  $\frac{1}{2}<s\leq 1$, then the space $\mathcal{E}^{s}$ is not embedded in
$L^{p}$ for $p<\frac{16s+2}{6s+1}$.\end{thm}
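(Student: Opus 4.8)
The plan is to refute the embedding by exhibiting an explicit one–parameter family of radial test functions saturating the exponent $\tfrac{16s+2}{6s+1}$: a single ``annular bump'' whose radius $R$ tends to $\infty$, with height and width tuned so that $\|\cdot\|_{\dot H^s}$ and the Coulomb energy both stay bounded while $\|\cdot\|_{L^p}$ diverges for every $p<\tfrac{16s+2}{6s+1}$. I would fix once and for all $\chi\in C_0^\infty(\R)$ with $\chi\ge0$, $\chi\equiv1$ on $[-\tfrac12,\tfrac12]$ and $\operatorname{supp}\chi\subset[-1,1]$, and for $R\to\infty$ set
$$
\varphi_R(x)=a\,\chi\!\left(\frac{|x|-R}{\delta}\right),
$$
with a height $a=a(R)>0$ and a width $\delta=\delta(R)\in(0,R)$ to be chosen. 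Each $\varphi_R$ is radial, nonnegative, smooth and compactly supported (its support lies in the annulus $R-\delta\le|x|\le R+\delta$, which avoids the origin for $R$ large), so $\varphi_R\in\mathcal E^s$.

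The argument would rest on three elementary scaling estimates, all in the regime $\delta\ll R$. First, since $\varphi_R\equiv a$ on a set of measure $\sim R^2\delta$,
$$
\|\varphi_R\|_{L^q(\R^3)}^{q}\sim a^{q}R^{2}\delta\qquad (1\le q<\infty).
$$
Second, $|\nabla\varphi_R|\sim a/\delta$ on the annulus $\{\,\big|\,|x|-R\,\big|\lesssim\delta\,\}$, so that $\|\varphi_R\|_{L^2}^2\sim a^2R^2\delta$ and $\|\varphi_R\|_{\dot H^1}^2\sim a^2R^2\delta^{-1}$; the interpolation inequality $\|\varphi_R\|_{\dot H^s}\le\|\varphi_R\|_{L^2}^{1-s}\|\varphi_R\|_{\dot H^1}^{s}$, which needs $0\le s\le1$, then gives
$$
\|\varphi_R\|_{\dot H^s(\R^3)}^{2}\lesssim a^{2}R^{2}\delta^{1-2s}.
$$
Third, for the Coulomb energy I would estimate the kernel on the product of two copies of the annulus, splitting $\int\frac{dy}{|x-y|}$ (for $x$ on the annulus) into the near–diagonal part $|x-y|\lesssim\delta$, the intermediate part $\delta\lesssim|x-y|\lesssim R$ — where the annulus is locally a slab of thickness $\delta$ — and the antipodal part $|x-y|\sim R$, to obtain $\int\frac{dy}{|x-y|}\lesssim R\delta$ uniformly, whence
$$
\iint_{\R^3\times\R^3}\frac{|\varphi_R(x)|^2|\varphi_R(y)|^2}{|x-y|}\,dx\,dy\lesssim a^4R^3\delta^2.
$$

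The last step would be to optimize over $a$ and $\delta$. Imposing $\|\varphi_R\|_{\dot H^s}^2\lesssim1$ and Coulomb energy $\lesssim1$ forces $a\lesssim\delta^{(2s-1)/2}R^{-1}$ and $a\lesssim\delta^{-1/2}R^{-3/4}$, and these two constraints balance exactly when $\delta=R^{1/(4s)}$ — which lies in $(0,R)$ since $s>\tfrac12$ — and then $a=R^{-(6s+1)/(8s)}$. With these choices one checks that $\|\varphi_R\|_{\dot H^s}^2\sim1$ and the Coulomb energy $\sim1$, so $\|\varphi_R\|_{\mathcal E^s}$ stays bounded, while
$$
\|\varphi_R\|_{L^p(\R^3)}^{p}\sim a^{p}R^{2}\delta= R^{\frac{16s+2-p(6s+1)}{8s}}\longrightarrow\infty\qquad\text{for every } p<\frac{16s+2}{6s+1}.
$$
If $\mathcal E^s\hookrightarrow L^p$ held for such a $p$, then $\|\varphi_R\|_{L^p}\le C\|\varphi_R\|_{\mathcal E^s}\le C'$, a contradiction, which proves the theorem.

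I expect the main obstacle to be the Coulomb upper bound $a^4R^3\delta^2$: one must treat the intermediate and antipodal contributions of the annulus carefully and verify that the thin–slab geometry ($\delta\ll R$) is what governs the estimate. The interpolation step is what confines the statement to $s\le1$, since for $s>1$ the bound on $\|\varphi_R\|_{\dot H^s}$ via $\dot H^1$ becomes lossy, and this construction is no longer available there.
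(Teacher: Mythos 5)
Your proposal is correct and is essentially the paper's own proof: the same annular bump, with exactly the same optimized width $\delta=S=R^{1/(4s)}$ and height $a=\varepsilon=R^{-(6s+1)/(8s)}$ (the paper merely parametrizes by $\varepsilon\to 0$ rather than $R\to\infty$), the same three estimates (Coulomb $\lesssim a^4R^3\delta^2$, $\|\cdot\|_{L^p}^p\gtrsim a^pR^2\delta$, $\|\cdot\|_{\dot H^s}^2\lesssim a^2R^2\delta^{1-2s}$), and the same conclusion. The only cosmetic difference is that you obtain the $\dot H^s$ bound by interpolating between $L^2$ and $\dot H^1$ of a smooth bump, while the paper's Lemma \ref{lem:stimaHs} computes the Gagliardo double integral of a tent profile directly; both routes are what confine the argument to $s\le 1$.
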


From the continuous embedding for $\mathcal{E}^s$ it is elementary to derive the scaling invariant lower bounds for the Coulomb energy given by
\eqref{eq:dbound} for  $p\in\left( \frac{16s+2}{6s+1},  \frac 6{3-2s}\right]$  and $\frac{1}{2}<s< \frac 32$.
Moreover we prove that the best constants of the  lower bounds  are  achieved among radially symmetric functions.
\begin{cor}\label{corr}
Let $\varphi$ be radially symmetric, then the following scaling invariant inequality holds
\begin{equation}
\label{eq:dbound2}
\|\varphi\|_{L^{2p}(\R^3)}\leq C(p,s) \|\varphi\|_{\dot H^{s}(\R^3)}^{\frac{\theta}{2-\theta}}\left(\iint_{\R^3\times \R^3}
\frac{|\varphi(x)|^2 |\varphi(y)|^2}{|x-y|} \, dxdy\right)^{\frac{1-\theta}{4-2\theta}}
\end{equation}
with $\theta=\frac{6-5p}{3-2ps-2p}$. Here the parameters $s$ and $p$ satisfy
$$ p\in (\frac{8s+1}{6s+1},  \frac 3{3-2s}]  \ \ \text{if}\ 1/2< s<3/2. $$
Assume, moreover, that $p\neq \frac 3{3-2s}$, then the best constant in \eqref{eq:dbound2} is achieved in the set of radially symmetric functions.

\end{cor}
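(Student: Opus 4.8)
Throughout, write $K(\varphi)=\iint_{\R^3\times\R^3}|x-y|^{-1}|\varphi(x)|^2|\varphi(y)|^2\,dx\,dy$ for the Coulomb energy. The plan is to treat the two assertions in turn. For the inequality \eqref{eq:dbound2} I would start from the continuous embedding of Theorem \ref{thm:nonsharp}: for $p\in(\tfrac{8s+1}{6s+1},\tfrac 3{3-2s}]$ we have $2p\in(\tfrac{16s+2}{6s+1},\tfrac 6{3-2s}]$, whence $\|\varphi\|_{L^{2p}}\le C\bigl(\|\varphi\|_{\dot H^s}^2+K(\varphi)^{1/2}\bigr)^{1/2}$ for every $\varphi\in\mathcal{E}^s$. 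Applying this to the dilate $\varphi_\lambda(x)=\varphi(\lambda x)$ and inserting the homogeneities $\|\varphi_\lambda\|_{L^{2p}}=\lambda^{-3/(2p)}\|\varphi\|_{L^{2p}}$, $\|\varphi_\lambda\|_{\dot H^s}=\lambda^{s-3/2}\|\varphi\|_{\dot H^s}$, $K(\varphi_\lambda)=\lambda^{-5}K(\varphi)$ gives $\|\varphi\|_{L^{2p}}\le C\bigl(\lambda^{3/p+2s-3}\|\varphi\|_{\dot H^s}^2+\lambda^{3/p-5/2}K(\varphi)^{1/2}\bigr)^{1/2}$. The two exponents $\tfrac3p+2s-3$ and $\tfrac3p-\tfrac52$ have opposite signs — the first is positive because $p<\tfrac3{3-2s}$, the second negative because $p>\tfrac{8s+1}{6s+1}>\tfrac65$ when $s>\tfrac14$ — so the right-hand side attains a genuine minimum over $\lambda>0$; balancing the two terms at the optimal $\lambda$ is an elementary computation and produces \eqref{eq:dbound2} with $\theta=\tfrac{6-5p}{3-2ps-2p}$. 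At the excluded endpoint $p=\tfrac3{3-2s}$ one has $\theta=1$ and \eqref{eq:dbound2} reduces to the Sobolev embedding $\dot H^s\hookrightarrow L^{6/(3-2s)}$.

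For the attainment, put $S=\inf\bigl\{\|\varphi\|_{\dot H^s}^{\theta/(2-\theta)}K(\varphi)^{(1-\theta)/(4-2\theta)}:\varphi\in\mathcal{E}^s,\ \|\varphi\|_{L^{2p}}=1\bigr\}$, so that the best constant in \eqref{eq:dbound2} equals $1/S$ and $S>0$ by the bound just proved (note $0<\theta<1$ on the whole range once $p\neq\tfrac3{3-2s}$, since $\theta$ increases in $p$ from $\tfrac1{4s+1}$ to $1$). Let $(\varphi_n)$ be a minimizing sequence. The ratio defining $S$ is invariant under $\varphi\mapsto\mu\,\varphi(\lambda\,\cdot)$, and the dilations preserving the constraint $\|\varphi\|_{L^{2p}}=1$ multiply $\|\varphi\|_{\dot H^s}^2$ by $\lambda^{\,3/p+2s-3}$; since $p\neq\tfrac3{3-2s}$ this exponent is nonzero, so after such a rescaling I may assume $\|\varphi_n\|_{L^{2p}}=\|\varphi_n\|_{\dot H^s}=1$ for all $n$. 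Then $K(\varphi_n)\to K_\infty:=S^{(4-2\theta)/(1-\theta)}\in(0,\infty)$, and $(\varphi_n)$ is bounded in $\mathcal{E}^s$.

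Passing to a subsequence, $\varphi_n\rightharpoonup\varphi$ in $\dot H^s_{rad}(\R^3)$; by the compactness of the embedding $\mathcal{E}^s\hookrightarrow L^{2p}(\R^3)$ in the strictly subcritical range $2p<\tfrac6{3-2s}$ we also get $\varphi_n\to\varphi$ strongly in $L^{2p}$, so $\|\varphi\|_{L^{2p}}=1$ and in particular $\varphi\neq0$; along a further subsequence $\varphi_n\to\varphi$ a.e. Weak lower semicontinuity of the $\dot H^s$-norm gives $\|\varphi\|_{\dot H^s}\le1$, and Fatou's lemma applied to the nonnegative integrand $|x-y|^{-1}|\varphi_n(x)|^2|\varphi_n(y)|^2$ gives $K(\varphi)\le\liminf_n K(\varphi_n)=K_\infty$. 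Hence $\|\varphi\|_{\dot H^s}^{\theta/(2-\theta)}K(\varphi)^{(1-\theta)/(4-2\theta)}\le K_\infty^{(1-\theta)/(4-2\theta)}=S$; together with $\|\varphi\|_{L^{2p}}=1$ and the definition of $S$ this forces equality, so the radial function $\varphi$ (nonzero, in $\mathcal{E}^s$) is an extremal for \eqref{eq:dbound2} and the best constant is achieved in the radial class.

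The main obstacle is precisely this compactness of $\mathcal{E}^s\hookrightarrow L^{2p}$ for $2p$ strictly between $\tfrac{16s+2}{6s+1}$ and $\tfrac6{3-2s}$ — the step that rules out escape of $L^{2p}$-mass along the minimizing sequence, without which one could only conclude $\|\varphi\|_{L^{2p}}\le1$. Heuristically there is no obstruction: a sequence $\mu_n\varphi(\lambda_n\,\cdot)$ with $\lambda_n\to0$ or $\lambda_n\to\infty$ cannot keep $\|\cdot\|_{\mathcal{E}^s}$ bounded while keeping $\|\cdot\|_{L^{2p}}$ bounded away from zero exactly when $\tfrac65<p<\tfrac3{3-2s}$. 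To make this rigorous I would argue as in the proof of Theorem \ref{thm:nonsharp}: De N\'apoli's pointwise bound (Theorem \ref{thm:denapoli}) controls $|\varphi(x)|$ for $|x|$ large, and also for $|x|$ small, by $\|\varphi\|_{\dot H^s}$ and a weighted $L^q_a$-norm; Ruiz's inequality (Theorem \ref{thm:ruiz}) bounds the relevant weighted norm by $K(\varphi)^{1/2}$; and the resulting uniform decay — strictly better than $L^{2p}$-integrability at both ends when $2p$ avoids the endpoints — yields uniform smallness of $\int_{|x|<\varepsilon}|\varphi_n|^{2p}$ and $\int_{|x|>R}|\varphi_n|^{2p}$, which combined with the local Rellich compactness $\dot H^s\hookrightarrow L^{2p}_{\mathrm{loc}}$ upgrades the weak limit to a strong $L^{2p}$ limit. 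The only delicate point is to check that the exponents in these pointwise estimates leave genuine room whenever $2p$ is strictly subcritical, and to choose the weight parameters $q,a$ so that the estimates of De N\'apoli and Ruiz can indeed be combined.
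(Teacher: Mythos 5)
Your derivation of the inequality \eqref{eq:dbound2} is exactly the paper's: apply Theorem \ref{thm:nonsharp} to a dilate, note the two $\lambda$-exponents have opposite signs on the stated range, and optimize. (The paper scales with $\lambda^{3/p}\varphi(\lambda x)$ so as to freeze the $L^{2p}$ norm, you scale with $\varphi(\lambda x)$; the computation is the same.) For the attainment, however, you take a genuinely different route. The paper imports the machinery of \cite{BFV}: normalize $\|\varphi_n\|_{\dot H^s}=K(\varphi_n)=1$, get uniform $L^{p_1}$ and $L^{p_2}$ bounds for $p_1<2p<p_2$, apply the pqr-Lemma of \cite{FLL} to prevent vanishing, then Lieb's compactness lemma to extract a nontrivial weak limit, and finally the non-local Brezis--Lieb lemma plus the Hilbert structure of $\dot H^s$ to show the limit is optimal. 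You instead prove that the radial embedding $\mathcal{E}^s\hookrightarrow L^{2p}$ is \emph{compact} in the open range of exponents, which upgrades weak convergence to strong $L^{2p}$ convergence and lets you close the argument with nothing more than weak lower semicontinuity of the $\dot H^s$ norm and Fatou for the Coulomb term. Your route is correct and, in the radial setting, arguably cleaner: the tail tightness follows from exactly the De N\'apoli--Ruiz estimates already used in the proof of Theorem \ref{thm:nonsharp} (choose $\gamma>\tfrac12$ close enough to $\tfrac12$ that $\sigma(2p-2)>\gamma$, which is precisely $2p>\tfrac{16s+2}{6s+1}$), and local compactness is Rellich on annuli. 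What the paper's heavier argument buys is independence from radial decay, which is why it is the one that works in the non-radial setting of \cite{BFV}.

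One point in your compactness sketch needs correcting. Near the origin De N\'apoli's bound gives $|\varphi(x)|\lesssim |x|^{-\sigma}$ with $\sigma\to\frac{3s+1/2}{4s}$ as $\gamma\to\frac12$, and $2p\sigma<3$ fails for $2p>\frac{24s}{6s+1}$, which is strictly inside your admissible range (e.g.\ $s=1$, $2p\in(\frac{24}{7},6)$); so the pointwise estimate is \emph{not} ``strictly better than $L^{2p}$-integrability'' at the origin, only at infinity. The uniform smallness of $\int_{|x|<\varepsilon}|\varphi_n|^{2p}$ should instead come from H\"older against a strictly higher Lebesgue norm that is uniformly bounded -- either the critical Sobolev norm $L^{6/(3-2s)}$ or the endpoint $L^{p^*}$ of \eqref{eq:dbound}, exactly as the paper handles $B(0,1)$ in the proof of Theorem \ref{thm:nonsharp}. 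With that substitution your argument is complete.
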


In Figure 1 the behavior of $p$ as a function of $s$ is plotted. \\
\\
{\bf Acknowledgement:} the authors thanks  Nicola Visciglia for fruitful conversations. The first author thanks also Rupert Frank and Elliot Lieb for interesting discussions
around the problem.

\begin{figure}\label{figu}
{\includegraphics[width=8cm,height=4.5cm]{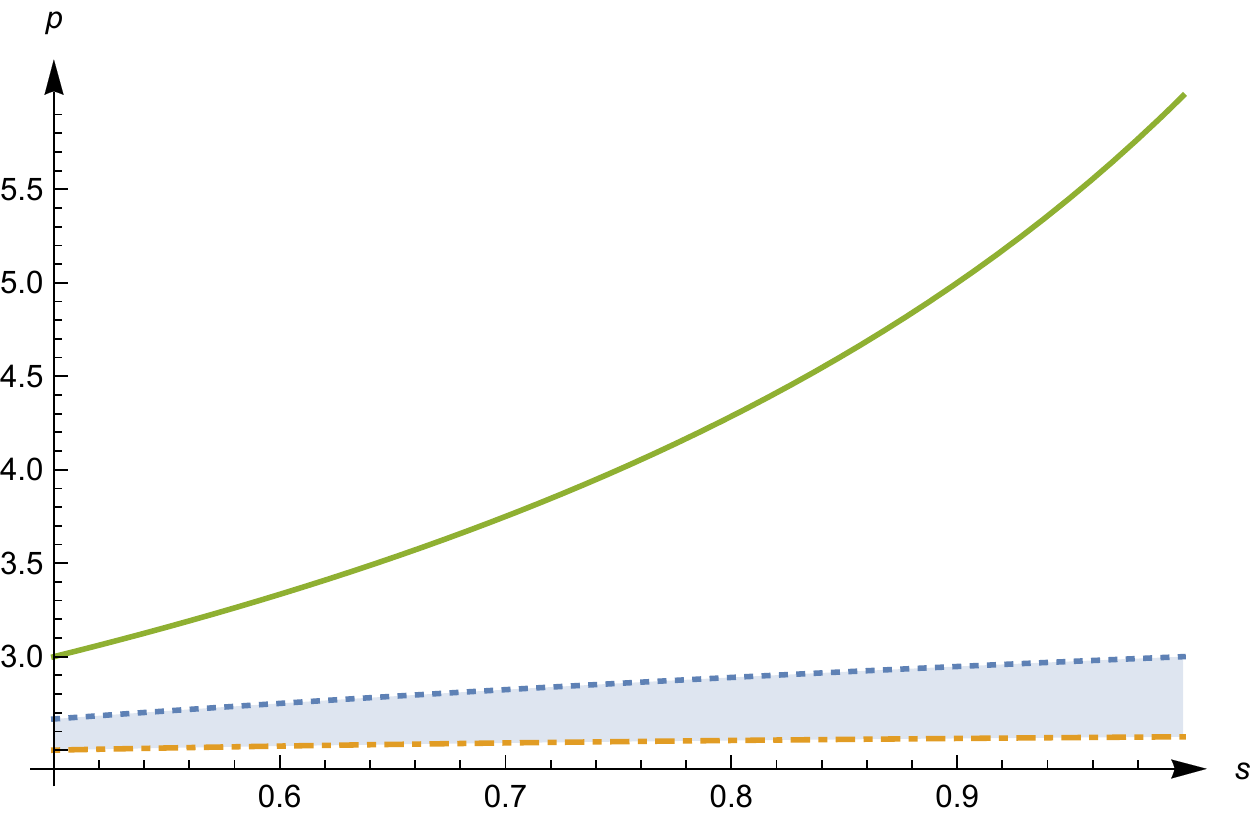}}
	 \caption{$L^p$ lower bound for Coulomb energy: without radial symmetry the lower bound is $\frac{2+4s}{1 +s}$(dotted) and for the radially symmetric case is $\frac{16s+2}{6s+1}$ (dash-dotted). The bold line plots the Sobolev embedding exponent for $\frac 12<s\leq 1.$}
\end{figure}
\section{Proof of Theorem \ref{thm:nonsharp}}
\begin{prop}\label{facile}
Let $\gamma>\frac{1}{2}$ and $\frac{\gamma}{2}<s<\frac{3}{2}$ then exists $c(\gamma,s )>0$ such that for any $\varphi \in \mathcal{E}^s$
$$\left(\int_{\R^3} |x|^{-\gamma}|\varphi |^2dx\right)\leq c(\gamma,s) ||\varphi||_{\mathcal{E}^s}^2.$$
\end{prop}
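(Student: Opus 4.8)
The plan is to split the weighted integral at the unit sphere,
$$\int_{\R^3}|x|^{-\gamma}|\varphi|^2\,dx=\int_{|x|\le 1}|x|^{-\gamma}|\varphi|^2\,dx+\int_{|x|>1}|x|^{-\gamma}|\varphi|^2\,dx,$$
and to treat the two pieces with the two tools recalled above: Ruiz's Theorem~\ref{thm:ruiz} handles the outer region, where the weight $|x|^{-\gamma}$ is mild, while De N\'{a}poli's Theorem~\ref{thm:denapoli} (through the radial pointwise decay it provides) handles the inner region, where $|x|^{-\gamma}$ is genuinely more singular than anything the weight appearing in Theorem~\ref{thm:ruiz} can absorb. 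The two numerical thresholds in the hypothesis will enter one per region.

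For the outer region I would first note that, since $\gamma>\tfrac12$, the quantity $|x|^{\frac12-\gamma}(1+|\log|x||)^{\alpha}$ is bounded on $\{|x|\ge 1\}$ for every $\alpha>0$; fixing some $\alpha\in(\tfrac12,1)$ this gives $|x|^{-\gamma}\le C(\gamma,\alpha)\,|x|^{-1/2}(1+|\log|x||)^{-\alpha}$ on $\{|x|\ge 1\}$. Applying Theorem~\ref{thm:ruiz} with $d=3$ then yields
$$\int_{|x|>1}|x|^{-\gamma}|\varphi|^2\,dx\le C\int_{\R^3}\frac{|\varphi(x)|^2}{|x|^{1/2}(1+|\log|x||)^{\alpha}}\,dx\le C\left(\iint_{\R^3\times\R^3}\frac{|\varphi(x)|^2|\varphi(y)|^2}{|x-y|}\,dx\,dy\right)^{1/2}\le C\|\varphi\|_{\mathcal{E}^s}^2.$$

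For the inner region I would exploit radial symmetry. Since $\varphi\in\dot H^s(\R^3)$ with $0<s<\tfrac32$, the Sobolev embedding gives $\varphi\in L^q(\R^3)$ with $q=\tfrac{6}{3-2s}$ and $\|\varphi\|_{L^q(\R^3)}\le C\|\varphi\|_{\dot H^s(\R^3)}$, so in particular $\varphi\in L^q_0(\R^3)$. I would then apply Theorem~\ref{thm:denapoli} with $d=3$, this $q$, and $a=0$: the admissibility condition $-(d-1)<a<d(q-1)$ reads $-2<0<3(q-1)$ and holds, one has $\theta=\tfrac{2}{2sq+2-q}=\tfrac{3-2s}{4s}\in(0,1)$, and a short computation gives $\sigma=\tfrac{3}{2}-s$. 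Hence
$$|\varphi(x)|\le C|x|^{-(3/2-s)}\,\|(-\Delta)^{s/2}\varphi\|_{L^2(\R^3)}^{\theta}\,\|\varphi\|_{L^q(\R^3)}^{1-\theta}\le C|x|^{-(3/2-s)}\,\|\varphi\|_{\dot H^s(\R^3)},$$
and squaring this bound and integrating it against $|x|^{-\gamma}$ over $\{|x|\le 1\}$ gives
$$\int_{|x|\le 1}|x|^{-\gamma}|\varphi|^2\,dx\le C\|\varphi\|_{\dot H^s(\R^3)}^2\int_{|x|\le 1}|x|^{2s-3-\gamma}\,dx.$$
The last integral is finite exactly when $2s-\gamma>0$, i.e.\ when $s>\gamma/2$, which is the standing hypothesis; so this term too is bounded by $C\|\varphi\|_{\mathcal{E}^s}^2$. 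Adding the two estimates proves the proposition.

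The individual computations are routine, so the main thing to watch is simply that the two regions require different inputs and that the two thresholds are sharp for what they do: $\gamma>\tfrac12$ is precisely the condition making $|x|^{-\gamma}$ dominated by the logarithmic Coulomb weight of Theorem~\ref{thm:ruiz} at infinity, while $s>\gamma/2$ is precisely the condition making the pointwise bound $|x|^{-(3/2-s)}$ square-integrable against $|x|^{-\gamma}$ near the origin. As a variant, one may replace the appeal to Theorem~\ref{thm:denapoli} in the inner region by H\"older's inequality with exponents $\tfrac{3}{3-2s}$ and $\tfrac{3}{2s}$ together with the same Sobolev embedding, the convergence condition being again $\gamma<2s$.
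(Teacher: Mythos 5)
Your argument is correct where it applies, and it reaches the conclusion by a genuinely different route on the inner region, so a comparison is worthwhile. The paper also splits space into a ball and its complement and also handles the outer piece with Ruiz's Theorem~\ref{thm:ruiz} (it states the consequence with the weight $(1+|x|)^{-\gamma}$, which requires the same domination of the weight by $|x|^{-1/2}(1+|\log|x||)^{-\alpha}$ that you spell out); the difference is entirely in the inner piece. There the paper writes $|x|^{-\gamma}\le R^{2s-\gamma}|x|^{-2s}$ on $B(0,R)$ and invokes Pitt's inequality $\int |x|^{-2s}|\varphi|^2\,dx\le c_s\|\varphi\|_{\dot H^s}^2$, which is valid for all $0<s<\frac32$ and needs no radial symmetry. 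You instead invoke Theorem~\ref{thm:denapoli} with $a=0$, $q=\frac{6}{3-2s}$, which after the Sobolev embedding collapses to the Cho--Ozawa radial decay $|\varphi(x)|\lesssim |x|^{s-3/2}\|\varphi\|_{\dot H^s}$; your exponent arithmetic ($\theta=\frac{3-2s}{4s}$, $\sigma=\frac32-s$, convergence iff $\gamma<2s$) is right. The one caveat is that Theorem~\ref{thm:denapoli} is stated only for $s>\frac12$, whereas the hypotheses $\gamma>\frac12$, $\frac\gamma2<s<\frac32$ permit $s\in(\frac14,\frac12]$ when $\gamma<1$, so your primary route does not cover the full stated range of the proposition (it does cover everything needed for the application to Theorem~\ref{thm:nonsharp}, where $s>\frac12$). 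Your closing variant --- H\"older with exponents $\frac{3}{3-2s}$ and $\frac{3}{2s}$ plus the Sobolev embedding --- repairs this completely: it works for all $0<s<\frac32$ with $\gamma<2s$, uses neither radial symmetry nor the pointwise decay, and is essentially interchangeable with the paper's Pitt-inequality step. If you promote that variant to the main argument (or use Pitt/Hardy directly), the proof is complete on the whole stated range.
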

\begin{proof}
By elementary computation we notice that if $2s>\gamma$
$$\left(\int_{\R^3} |x|^{-\gamma}|\varphi |^2dx\right)\leq R^{2s-\gamma}\left(\int_{B(0,R)}\frac{|\varphi |^2}{|x|^{2s}}dx\right)+ \frac{(1+R)^{\gamma}}{R^{\gamma}}\left(\int_{B(0,R)^c}\frac{|\varphi |^2}{(1+|x|)^{\gamma}}dx\right).$$
On the other hand, if $0<s<\frac{3}{2}$, by Pitt inequality \cite{B},
$$c_s||\varphi||_{\dot H^s(\R^3)}^2=c_s\left(\int_{\R^3}|\hat \varphi |^2|\xi|^{2s}d\xi\right)\geq \left(\int_{\R^3}\frac{| \varphi |^2}{|x|^{2s}}d\xi\right)$$
where $c_s=\pi^{2s}\left[\frac{\Gamma(\frac{3-2s}{4})}{\Gamma(\frac{3+2s}{4})}\right]^2$
and by Ruiz's Theorem \ref{thm:ruiz}
$$\left(\int_{\R^3}\frac{|\varphi |^2}{(1+|x|)^{\gamma}}dx\right)\leq c(\gamma) \left(\iint_{\R^3\times \R^3}
\frac{|\varphi(x)|^2 |\varphi(y)|^2}{|x-y|} dxdy\right)^{\frac 12}.$$
\end{proof}
\begin{proof}[Proof of Theorem \ref{thm:nonsharp}]$$$$
By  Theorem \ref{thm:ruiz} we have that for any $\alpha>\frac{1}{2}$, $\mathcal{E}^s\subset L^2(\R^3, V(x)dx)$ where $V(x)=(\frac{1}{1+|x|})^{\gamma}$, $\gamma> \frac 12$.
Let us call $p^{*}=\frac{2+4s}{1 +s}$, the end-point  exponent for \eqref{eq:dbound}.  H\"older inequality assures that  for $p<p^{*}$
\begin{eqnarray}
\int_{B(0,1)} |\varphi|^p dx<\mu(B(0,1))^{\frac{p^{*}-p}{p}}\left(\int_{B(0,1)} |\varphi|^{p^{*}}dx\right)^{\frac{p}{p^{*}}}\leq  \nonumber \\
\leq  C \|\varphi\|_{\dot H^{s}(\R^3)}^{(\frac{\theta}{2-\theta})p}\left(\iint_{\R^3\times \R^3} \nonumber
\frac{|\varphi(x)|^2 |\varphi(y)|^2}{|x-y|} \, dxdy\right)^{(\frac{1-\theta}{4-2\theta})p}
\end{eqnarray}
with $\theta=\frac{6-\frac 52 p^{*}}{3-p^{*}s-p^{*}}$. On the other  hand by Proposition \ref{facile} and the radial decay
given by Theorem \ref{thm:denapoli} choosing $a=-\gamma$, $q=2$ and $d=3$,
\begin{eqnarray}
\int_{B(0,1)^c} |\varphi|^p dx=\int_{B(0,1)^c}  |\varphi(x)|^2|\varphi(x)|^{p-2}dx \leq \\ \nonumber
\leq C ||\varphi||_{\dot H^s(\R^3)}^{\theta(p-2)}||\varphi|||_{L^2_{-\gamma}(\R^3)}^{(1-\theta)(p-2)}  \int_{B(0,1)^c}   |x|^{-\sigma(p-2)}|\varphi(x)|^2dx \nonumber
\end{eqnarray}
where  $\theta=\frac{1}{2s}$, $\sigma=\frac{-2\gamma s+4s+\gamma}{4s}. $
Now 
$$\lim_{\gamma\rightarrow \frac 12} \frac{-2\gamma s+4s+\gamma}{4s}(p-2)=(\frac{3s+\frac{1}{2}}{4s})(p-2)$$
and this implies again by Proposition \ref{facile} that
$$\int_{B(0,1)^c} |\varphi|^p dx<+\infty$$
provided that
$(\frac{3s+\frac{1}{2}}{4s})(p-2)>\frac 12$, i.e if $p>\frac{16s+2}{6s+1}.$
\end{proof}

\section{Proof of Theorem \ref{thm:sharp}}
The proof of Theorem \ref{thm:sharp} is obtained constructing a counterexample, i.e 
a function $u$ such that 
\begin{align}
\nonumber &\|u\|_{\dot H^{s}(\R^3)}^{2}  \simeq 1\\
\label{eq:claim}&\iint_{\R^3\times \R^3}\frac{|u(x)|^2|u(y)|^{2}}{|x-y|} dxdy \simeq  1\\
\nonumber &||u||_{L^p(\R^3)}^{p}  \rightarrow +\infty
\end{align}

\begin{proof}[Proof of Theorem \ref{thm:sharp}.]$$$$
The case $s=1$ has been proved by Ruiz \cite{R}.
Set  $u:\mathbb{R}^{3}\rightarrow\mathbb{R}^{+}$
\begin{equation}\label{eq:u}
u(x)=\left\{ \begin{array}{cc}
\displaystyle \varepsilon\frac{ S-\big| |x|-R\big|}{S} & \text{ for }\big||x|-R\big|<S\\
&\\
0 & \text{ elsewhere}
\end{array}\right.
\end{equation}
where $R>S>>1>>\varepsilon>0$ will be precised in the sequel. 

We recall, by Ruiz \cite[Section 4]{R}, that
\[
\iint_{\R^3\times \R^3}\frac{|u(x)|^2|u(y)|^{2}}{|x-y|}dxdy \leq C \varepsilon^{4}S^{2}R^{3}
\]
and 
\[
||u||_{L^{p}(\R^3)}^{p}\ge C \varepsilon^{p}SR^{2}.
\]
Moreover we have
\begin{equation}\label{eq:Hs}
\|u\|_{\dot{H}^{s}(\R^3)}^{2}\leq C \frac{\varepsilon^{2}R^{2}}{S^{2s-1}}.
\end{equation}
The proof of (\ref{eq:Hs}) is not difficult and it will postponed to Lemma \ref{lem:stimaHs}. 

In order to have $\|u\|_{\dot{H}^{s}(\R^3)}^{2}\simeq 1$ we choose $S=\varepsilon^{\frac{2}{2s-1}}R^{\frac{2}{2s-1}}$.
At this point we have 
\[
\iint_{\R^3\times \R^3}\frac{|u(x)|^2|u(y)|^{2}}{|x-y|}dxdy\le C \varepsilon^{4}\varepsilon^{\frac{4}{2s-1}}R^{\frac{4}{2s-1}}R^{3}=C\varepsilon^{\frac{8s}{2s-1}}R^{\frac{6s+1}{2s-1}}
\]
and we choose $R=\varepsilon^{-\frac{8s}{6s+1}}$ to have the Coulomb norm bounded, so 
\[
S=\varepsilon^{\frac{2}{2s-1}}R^{\frac{2}{2s-1}}=\varepsilon^{\frac{2}{2s-1}}\varepsilon^{-\frac{8s}{6s+1}\frac{2}{2s-1}}=\varepsilon^{\frac{2-4s}{(6s+1)(2s-1)}}=\varepsilon^{-\frac{2}{6s+1}}.
\]
We remark that, since $s>1/2,$ then $8s>2$ and $R>S$, as required in the definition of $u(x)$. 

Concluding, we have
\[
||u||_{L^p(\R^3)}^{p}\ge C \varepsilon^{p}SR^{2}\simeq \varepsilon^{p-\frac{16s+2}{6s+1}}
\]
that diverges for $p<\frac{16s+2}{6s+1}$ when $\varepsilon\rightarrow 0$.
The claim follows immediately.
\end{proof}

\begin{lem}\label{lem:stimaHs}
Let $u$ be defined in (\ref{eq:u}). Then
$$\|u\|_{\dot{H}^{s}(\R^3)}^{2}\leq C \frac{\varepsilon^{2}R^{2}}{S^{2s-1}}.$$
\end{lem}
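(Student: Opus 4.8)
The plan is to avoid dealing with the fractional norm directly and instead reduce the estimate to the two endpoint cases $s=0$ and $s=1$, then interpolate. The only tool needed is the elementary Hölder inequality in the Fourier variable: for $0\le s\le 1$ and any sufficiently nice $v$,
$$
\|v\|_{\dot H^{s}(\R^3)}^{2}=\int_{\R^3}|\xi|^{2s}|\hat v(\xi)|^{2}\,d\xi
=\int_{\R^3}\bigl(|\hat v(\xi)|^{2}\bigr)^{1-s}\bigl(|\xi|^{2}|\hat v(\xi)|^{2}\bigr)^{s}\,d\xi
\le \|v\|_{L^{2}(\R^3)}^{2(1-s)}\,\|\nabla v\|_{L^{2}(\R^3)}^{2s}.
$$
Since $u$ is Lipschitz with compact support it lies in $H^{1}(\R^3)\subset\dot H^{s}(\R^3)$, so all quantities are finite and it suffices to estimate $\|u\|_{L^{2}}$ and $\|\nabla u\|_{L^{2}}$ and substitute.

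First I would record the volume of the support. The function $u$ is supported in the annulus $A=\{x\in\R^3 : R-S<|x|<R+S\}$, with $0\le u\le\varepsilon$ there, so
$$
\|u\|_{L^{2}(\R^3)}^{2}\le \varepsilon^{2}\,|A|=\tfrac{4\pi}{3}\varepsilon^{2}\bigl((R+S)^{3}-(R-S)^{3}\bigr)\le C\,\varepsilon^{2}R^{2}S,
$$
where the last inequality uses $S<R$. Next, on $A\setminus\{|x|=R\}$ one has $|\nabla u(x)|=\frac{\varepsilon}{S}\bigl|\nabla(\,||x|-R|\,)\bigr|=\frac{\varepsilon}{S}$, and $\nabla u=0$ off $A$; the only point to check is that the radial kink of $u$ on the sphere $\{|x|=R\}$ does not generate a singular part in the distributional gradient, which is immediate because $u$ is globally Lipschitz and $\{|x|=R\}$ has Lebesgue measure zero. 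Hence
$$
\|\nabla u\|_{L^{2}(\R^3)}^{2}=\frac{\varepsilon^{2}}{S^{2}}\,|A|\le C\,\frac{\varepsilon^{2}R^{2}}{S}.
$$

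Plugging these two bounds into the interpolation inequality gives, for $\tfrac12<s\le1$,
$$
\|u\|_{\dot H^{s}(\R^3)}^{2}\le\bigl(C\varepsilon^{2}R^{2}S\bigr)^{1-s}\bigl(C\varepsilon^{2}R^{2}/S\bigr)^{s}=C\,\varepsilon^{2}R^{2}S^{1-2s}=C\,\frac{\varepsilon^{2}R^{2}}{S^{2s-1}},
$$
which is exactly the claim (the case $s=1$ being just the gradient estimate). I do not expect a real obstacle here: the computation of $|A|$ and of $|\nabla u|$ is routine, and the interpolation step is one line. An alternative, slightly more laborious route would be to estimate the Gagliardo seminorm $\iint |u(x)-u(y)|^{2}|x-y|^{-3-2s}\,dx\,dy$ directly, splitting the integration region according to whether $|x-y|$ is smaller or larger than $S$; this also yields the bound but the interpolation argument is cleaner, so that is what I would write up.
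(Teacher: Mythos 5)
Your proof is correct, and it takes a genuinely different route from the paper. The paper estimates the Gagliardo seminorm $\iint|u(x)-u(y)|^{2}|x-y|^{-3-2s}\,dx\,dy$ directly: it first identifies the five regions of $\R^{3}\times\R^{3}$ where $u(x)-u(y)\neq0$, reduces by symmetry to integrating over $y$ in the annulus, and then performs exactly the near/far splitting in $|x-y|\lessgtr S$ that you mention as your ``alternative route,'' using the Lipschitz bound $|u(x)-u(y)|\le\frac{\varepsilon}{S}|x-y|$ for $|x-y|\le S$ and $|u|\le\varepsilon$ for $|x-y|\ge S$. Your argument instead computes only the two endpoint quantities $\|u\|_{L^{2}}^{2}\le C\varepsilon^{2}R^{2}S$ and $\|\nabla u\|_{L^{2}}^{2}\le C\varepsilon^{2}R^{2}/S$ (both correct, and the remark that the radial kink contributes no singular part is handled properly since $u$ is globally Lipschitz) and then interpolates via H\"older on the Fourier side. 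What you gain is brevity and a statement valid uniformly for $0\le s\le1$, including the endpoint $s=1$ with no separate discussion; what the paper's real-space computation buys is that it is self-contained in the Gagliardo representation it uses throughout for $s<1$ and makes visible where the two competing scales $S^{2-2s}/S^{2}$ and $S^{-2s}$ come from. Morally the two proofs encode the same dichotomy (gradient control at short distances, sup control at long distances); interpolation just packages it. One small bookkeeping point: the paper's $\dot H^{s}$ norm is the Gagliardo seminorm with the constant $C(s)$ making it equal to $\int|\xi|^{2s}|\hat u|^{2}\,d\xi$, so your Plancherel-based definition is consistent with theirs and no constant is lost.
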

\begin{proof}
We want to compute the $\dot{H}^{s}$ norm of $u$ for $s<1$, that
is
\[
\|u\|_{\dot{H}^{s}(\R^3)}^{2}=C(s)\int\limits _{\mathbb{R}^{3}}\int\limits _{\mathbb{R}^{3}}\frac{|u(x)-u(y)|^{2}}{|x-y|^{3+2s}}dxdy.
\]
where $C(s)=2^{2s-1}\pi^{-\frac{3}{2}}\frac{\Gamma(\frac{3+2s}{2})}{\Gamma(-s)}$. \\
We observe that $u(x)-u(y)\neq0$ in the following five subsets of
$\mathbb{R}^{3}\times\mathbb{R}^{3}$: 
\begin{align*}
A_{1} & =\left\{ R-S\le|y|\le R+S,\ |x|\le R-S\right\} \\
A_{2} & =\left\{ R-S\le|y|\le R+S,\ |x|\ge R+S\right\} \\
A_{3} & =\left\{ R-S\le|x|\le R+S,\ |y|\le R-S\right\} \\
A_{4} & =\left\{ R-S\le|x|\le R+S,\ |y|\ge R+S\right\} \\
A_{5} & =\left\{ R-S\le|x|\le R+S,\ R-S\le|y|\le R+S\right\} 
\end{align*}
and, by symmetry, we obtain
\begin{align*}
\frac{\|u\|_{\dot{H}^{s}(\R^3)}^{2}}{C(s)}= & \int\limits _{\mathbb{R}^{3}}\int\limits _{\mathbb{R}^{3}}\frac{|u(x)-u(y)|^{2}}{|x-y|^{3+2s}}dxdy=2\iint_{A_{1}}\frac{|u(x)-u(y)|^{2}}{|x-y|^{3+2s}}dxdy\\
 & +2\iint_{A_{2}}\frac{|u(x)-u(y)|^{2}}{|x-y|^{3+2s}}dxdy+\iint_{A_{5}}\frac{|u(x)-u(y)|^{2}}{|x-y|^{3+2s}}dxdy\\
\le & 2\int\limits _{R-S\le|y|\le R+S}\ \int\limits _{\mathbb{R}^{3}}\frac{|u(x)-u(y)|^{2}}{|x-y|^{3+2s}}dxdy.
\end{align*}
Therefore, 
\begin{align*}
\|u\|_{\dot{H}^{s}(\R^3)}^{2} \lesssim & \int\limits _{R-S\le|y|\le R+S}\ \int\limits _{|x-y|\le S}\frac{|u(x)-u(y)|^{2}}{|x-y|^{3+2s}}dxdy\\
 & +\int\limits _{R-S\le|y|\le R+S}\ \int\limits _{|x-y|\ge S}\frac{|u(x)-u(y)|^{2}}{|x-y|^{3+2s}}dxdy\\
\apprle & \int\limits _{R-S\le|y|\le R+S}\ \int\limits _{|x-y|\le S}\ \frac{\varepsilon^{2}}{S^{2}}\frac{|x-y|^{2}}{|x-y|^{3+2s}}dxdy\\
 & +\int\limits _{R-S\le|y|\le R+S}\ \int\limits _{|x-y|\ge S}\frac{\varepsilon^{2}}{|x-y|^{3+2s}}dxdy
\end{align*}
using that $|u(x)-u(y)|\le\sup_{\xi}|\nabla u(\xi)||x-y|\le\frac{\varepsilon}{S}|x-y|$
in the first term and that $|u(x)|\le\varepsilon$ in the second term.
At this point, with the change of variable $t=x-y$ we get 
\begin{align*}
\|u\|_{\dot{H}^{s}(\R^3)}^{2}\apprle & \varepsilon^{2}\int\limits _{R-S\le|y|\le R+S}\left[\int\limits _{|t|\le S}\ \frac{1}{S^{2}}\frac{1}{|t|^{1+2s}}dt+\int\limits _{|t|\ge S}\ \frac{1}{|t|^{3+2s}}dt\right]dy\\
\apprle & \varepsilon^{2}\int\limits _{R-S\le|y|\le R+S}\left[\int\limits _{0}^{S}\ \frac{1}{S^{2}}\frac{r^{2}}{r^{1+2s}}dr+\int\limits _{S}^{\infty}\ \frac{r^{2}}{r^{3+2s}}dr\right]dy\\
\apprle & \varepsilon^{2}R^{2}S\left[\int\limits _{0}^{S}\ \frac{1}{S^{2}}r^{1-2s}dr+\int\limits _{S}^{\infty}r^{-1-2s}dr\right]\simeq\varepsilon^{2}R^{2}S\left[\frac{S^{2-2s}}{S^{2}}+S^{-2s}\right]\\
\simeq & \frac{\varepsilon^{2}R^{2}}{S^{2s-1}}
\end{align*}

\end{proof}

\begin{proof}[Proof of Corollary \eqref{corr}]$$$$
From Theorem \ref{thm:nonsharp} it follows that
$$||\varphi_{\lambda}||_{L^p(\R^3)}^2\leq C \left(||\varphi_{\lambda}||_{\dot H^s(\R^3)}^2+\left(\iint_{\R^3\times \R^3}
\frac{|\varphi_{\lambda}(x)|^2 |\varphi_{\lambda}(y)|^2}{|x-y|} dxdy\right)^{\frac 12} \right)$$
if  $p\in\left( \frac{16s+2}{6s+1},  \ \  \frac 6{3-2s}\right] $ and  $\text{if}\ 1/2< s<3/2 $.  
Now let us consider the following scaling $$\varphi_{\lambda}=\lambda^{\frac{3}{p}}\varphi(\lambda x),$$
such that $||\varphi_{\lambda}||_{L^p(\R^3)}=||\varphi||_{L^p(\R^3)}$ for all $\lambda>0$. By elementary computation one gets
$$||\varphi||_{L^p(\R^3)}^2\leq C \left(\lambda^{\frac{6}{p}-(3-2s)}||\varphi||_{\dot H^s(\R^3)}^2+\lambda^{\frac{6}{p}-\frac 52}\left(\iint_{\R^3\times \R^3}
\frac{|\varphi(x)|^2 |\varphi(y)|^2}{|x-y|} dxdy\right)^{\frac 12} \right).$$
and  minimizing R.H.S we get the desired inequality. \\
The argument to show the existence of maximizers is identical to the one used to show Theorem 2.2 in \cite{BFV}. We just give a sketch of the proof for reader convenience. Let us fix $p$ in the set $( \frac{16s+2}{6s+1},  \frac 6{3-2s})$.
By homogeneity and scaling we can assume that an optimizing sequence $\varphi_n\in\mathcal E^s$ satisfies
$$
\|\varphi_n \|_{\dot H^s}=
\iint_{\R^3\times\R^3} \frac{|\varphi_n(x)|^2\ |\varphi_n(y)|^2}{|x-y|} \,dx\,dy = 1
$$
and
$$
\| \varphi_n \|_{L^{p}} = C(p,s) + o(1) \,.
$$
Thanks to inequality \eqref{eq:dbound2} we can find uniform upper bound on $\|\varphi_n\|_{L^{p_1}}$ and $\|\varphi_n\|_{L^{p_2}}$ for some $p_1<p<p_2$. Therefore, by the well known pqr-Lemma (see \cite{FLL})
$$
\inf_n \left|\{ |\varphi_n|>\eta \}\right| >0 \,.
$$
Now by Lieb's compactness lemma in $\dot H^s$, see Lemma 2.1 in \cite{BFV}, there exists $\varphi \neq 0$ such that  $\varphi_n \rightharpoonup \varphi \in\dot H^s(\R^3)\cap L^p(\R^3)$. Finally, by the non-local Brezis--Lieb lemma for the Coulomb term (see Lemma 2.2 in \cite{BFV}), and  by the Hilbert structure of $\dot H^s(\R^3)$, we prove the existence of a maximizer.
\end{proof}


\begin{thebibliography}{99}
\bibitem{BCD} H. Bahouri, J.-Y. Chemin, R. Danchin, \textit{Fourier Analysis and Nonlinear Partial Differential Equations}, Springer, 2011.
\bibitem{B} W. Beckner \emph{Pitt's inequality and the uncertainty principle, } Proc. Amer. Math. Soc. 123 (1995), no. 6, 1897Ð1905.
\bibitem{BFV} {J.Bellazzini, R.L. Frank, N. Visciglia, }{\sl Maximizers for Gagliardo-Nirenberg inequalities and related non-local problems },  Math. Annalen  (2014), no. 3-4, 653Ð-673.
\bibitem{BOV} J. Bellazzini, T. Ozawa, N. Visciglia, \emph{Ground states 
for semi-relativistic Schr\"odinger--Poisson--Slater energies}, arxiv:1103.2649.
\bibitem{CO}Y. Cho, T. Ozawa, \emph{Sobolev inequality with symmetry},  Commun. Contemp. Math. 11 (2009), no. 3, 355Ð365
\bibitem{D}  P.L. De N\'{a}poli \emph{Symmetry breaking for an elliptic equation involving the Fractional
 Laplacian }, arXiv:1409.7421 
 \bibitem{FLL} J. Fr\"ohlich,  E. H. Lieb, M. Loss, \emph{Stability 
of Coulomb systems with magnetic fields. I. The one-electron atom.},
Comm. Math. Phys. 104 (1986), no. 2, 251--270.
\bibitem{LS}  E.H. Lieb, R. Seiringer,  \emph{The stability of matter in quantum mechanics,} Cambridge University Press, Cambridge, 2010.
\bibitem{R} D. Ruiz, \emph{On the Schr\"odinger-Poisson-Slater system: behavior of minimizers, radial and nonradial cases},  Arch. Ration. Mech. Anal. 198 (2010), no. 1, 349Ð368
\bibitem{SWW} J. Su, Z.-Q. Wang, M. Willem, \emph{Weighted Sobolev embedding with unbounded
and decaying radial potentials} , J. Differential Equations 238 (2007), 201-219. 
\end{thebibliography}
\end{document}